\newtheorem{theorem}{Theorem}[section]
\newtheorem{remark}[theorem]{Remark}
\newtheorem{lemma}[theorem]{Lemma}
\newtheorem{proposition}[theorem]{Proposition}
\newtheorem{corollary}[theorem]{Corollary}
\def\C{\mathbb{C}}
\def\Ricci{\mathop{\rm Ricci}\nolimits}
\def\Iso{\mathop{\rm Iso}\nolimits}
\def\cC{{\mathcal C}}
\def\cF{{\mathcal F}}
\begin{document}
\title[]{The Bogomolov-Beauville-Yau decomposition\\ for  KLT Projective Varieties\\
 with trivial first Chern class
 --without tears--}

\

\author{Fr\'ed\'eric Campana}
\address{Universit\'e de Lorraine \\
 Institut Elie Cartan\\
Nancy \\ }

\

\email{frederic.campana@univ-lorraine.fr}

%\author{}
%\address{ \\
% }

%\

%\email{}

\date{\today}

\maketitle

\tableofcontents

\begin{abstract} We give a simplified proof (in characteristic zero) of the decomposition theorem for complex projective varieties with klt singularities and numerically trivial canonical bundle. The proof mainly consists in reorganizing some the  partial results obtained by many authors and used in the previous proof, but avoids those in positive characteristic by S. Druel. The single, to some extent new, contribution is an algebraicity and bimeromorphic splitting result for generically locally trivial fibrations with fibres without holomorphic vector fields. We give first the proof in the easier smooth case, following the same steps as in the general case, treated next.

The last two words of the title are plagiarized fom \cite{beuk}
\end{abstract}

\section{introduction}

When $X$ is smooth, connected, compact K\"ahler, with $c_1(X)=0$, the classical, metric, proof of the `Bogomolov-Beauville-(Yau) decomposition theorem', given in \cite{beauv} (the arguments of \cite{bog} being Hodge-theoretic), starts with a Ricci-flat K\"ahler metric (\cite{yau}), and then decomposes the universal cover $X'$ of $X$ according to De Rham theorem, in its holonomy factors. The Cheeger-Gromoll theorem then distinguishes the flat euclidian factor $\C^s$ of $X'$ from the (simply-connected) product $P$ of the others (which are compact and with holonomy either $SU(m)$ or $Sp(k)$). The compactness of $P$ combined with Bieberbach's theorem now imply that a finite \'etale cover of $X$ is the product of a complex torus $\C^s/\Gamma$ with $P$.

We shall first give a different proof, but only for $X$ smooth projective, of this product decomposition, weaker in the sense that $P$ is not showed to be simply connrected (see Theorem \ref{smooth} below). The proof indeed does not go through the universal cover, and uses neither the De Rham, nor the  Cheeger-Gromoll theorems. 

This allows for its extension (given next) to the singular case obtained in \cite{hp}, which uses many other other partial results, among which are those of \cite{GGK} and  \cite{SD} (which plays to a certain extent the r\^ole of the Cheeger-Gromoll theorem). Our proof makes the positive characteristic step in \cite{SD} superfluous, by deducing directly the algebraicity of the foliation given by the flat factor of the holonomy, independently of the Albanese map, from the splitting result (see Theorem \ref{algcrit}) below, once  the algebraicity of the leaves of the foliations given by the non-flat factors of the holonomy have been first shown to be algebraic.

I thank Benoit Claudon and Mihai P\u aun for their help in reading the text, and several discussions. After this text was posted on arXiv, I received useful comments by S. Druel, and H. Guenancia. I thank both of them too.

\section{The smooth case}

We tret this case first, in order to show in a simpler context the steps in the general case. 

\begin{theorem}\label{smooth} Let $X$ be a smooth connected complex projective manifold with $c_1(X)=0$. There exists a finite \'etale cover of $X$ which is a product of an abelian variety with projective manifolds either irreducible symplectic, or Calabi-Yau.
\end{theorem}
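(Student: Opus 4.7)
The plan is to invoke Yau's theorem to equip $X$ with a Ricci-flat K\"ahler metric $\omega$, then exploit the holonomy decomposition at a single point, rather than on the universal cover. At $x\in X$, the tangent space splits as $T_xX = V_0 \oplus V_1 \oplus \cdots \oplus V_r$ into irreducible holonomy factors, with $V_0$ the flat factor and each $V_i$ (for $i\geq 1$) of holonomy $SU(m_i)$ or $Sp(k_i)$. Parallel transport extends this to a parallel orthogonal decomposition of $TX$ into holomorphic sub-bundles $T_0, T_1, \ldots, T_r$; being parallel, each is involutive, yielding holomorphic foliations $\cF_0, \ldots, \cF_r$, and a generic leaf $L_i$ of $\cF_i$ ($i\geq 1$) is a Ricci-flat K\"ahler manifold with irreducible $SU$ or $Sp$ holonomy, hence without non-zero holomorphic vector fields.

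The crucial step is to promote the non-flat foliations to algebraic ones. I would pool them into a single foliation $\cF_{\geq 1}$ with tangent bundle $T_1 \oplus \cdots \oplus T_r$, and argue that its leaves are compact and algebraic using polystability of $T_{\cF_{\geq 1}}$ (inherited from parallelism and the Ricci-flat metric), the vanishing of its Chern classes, and the projectivity of $X$ (e.g.\ via an algebraic integrability criterion along the lines of Miyaoka/Bogomolov). Given this, parallel transport of $\omega$ along $\cF_0$ makes the quotient map $\pi\colon X \to B$ parametrizing the leaves of $\cF_{\geq 1}$ generically locally trivial, with fibres isomorphic to a fixed compact $L$ with $c_1(L)=0$ and no non-zero holomorphic vector field.

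At that point, Theorem \ref{algcrit} applies and upgrades this generic local triviality to a genuine bimeromorphic, hence biholomorphic (by smoothness), product structure $\wt X \cong L \times B'$ after a finite \'etale cover $\wt X \to X$. The fibre $L$ refines by its own parallel holonomy decomposition into a product of compact factors of holonomy $SU(m_i)$ (Calabi--Yau) and $Sp(k_j)$ (irreducible holomorphic symplectic), possibly after a further \'etale cover to separate these irreducible pieces. The base $B'$ corresponds to the flat foliation $\cF_0$: its tangent bundle is holomorphically trivial, so $B'$ is a parallelizable compact K\"ahler manifold, hence a complex torus, and projectivity (inherited from $X$) promotes it to an abelian variety.

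The main obstacle I anticipate is the algebraicity step for $\cF_{\geq 1}$: showing compact algebraicity of the non-flat holonomy leaves without invoking Cheeger--Gromoll on the universal cover. This is the point at which projectivity, as opposed to mere compact K\"ahlerness, must enter essentially, via stability of the parallel sub-bundles $T_i$ combined with the vanishing of their Chern classes; everything else in the outline---the parallel decomposition, the splitting after \'etale cover via Theorem \ref{algcrit}, and the identification of the flat factor with an abelian variety---should then fall into place routinely.
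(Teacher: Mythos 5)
Your overall architecture is the same as the paper's: Ricci-flat metric via Yau, holonomy decomposition of $T_X$ into parallel foliations, algebraicity and compactness of the leaves of the non-flat factors, splitting after a finite \'etale cover using $h^0(F,T_F)=0$ (the paper does this directly for each proper locally trivial fibration $f_i:X\to B_i$; your detour through Theorem \ref{algcrit} is heavier but legitimate), and Bieberbach for the flat factor. One preliminary point you elide: parallel transport alone only gives the decomposition locally (or on the universal cover), since $Hol/Hol^0$ permutes the irreducible factors; you must first pass to the finite \'etale cover killing this permutation representation before $T_X$ globally splits into sub-bundles.

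The genuine gap is exactly where you anticipate it, and your proposed mechanism for closing it does not work. ``Polystability of $T_{\cF_{\geq 1}}$ plus vanishing of its Chern classes plus a Miyaoka/Bogomolov-type criterion'' cannot imply algebraic integrability: the flat factor $\cF_0$ is equally a parallel, polystable sub-bundle with $c_1=0$, and an irrational linear foliation on an abelian variety is parallel, polystable, with trivial Chern classes and dense non-algebraic leaves. Moreover the Miyaoka/Bogomolov--McQuillan criteria require strictly positive (minimal) slope on general complete-intersection curves, which is unavailable here since $c_1(T_i)\cdot H^{n-1}=0$; and the non-flat factors do \emph{not} have vanishing higher Chern classes (indeed $c_2\neq 0$ characterizes them against the flat factor). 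The ingredient that actually distinguishes the non-flat factors is the irreducibility and non-triviality of their holonomy: by \cite{fh} all symmetric powers $Sym^m(T_i)$ are irreducible representations, hence stable, whence $T_i^{\star}$ is not pseudo-effective by \cite{CCP} (or \cite{hp}); only then does the algebraicity criterion of \cite{CP19} (Theorem 4.2, Lemma 4.6) apply, with $c_1(T_i)=0$ supplied separately by Lemma \ref{c1=0}. Without this chain your ``main obstacle'' remains open, and as stated your criterion would also (falsely) algebraize the flat factor.
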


\begin{remark} The notions of irreducible symplectic and Calabi-Yau manifolds are defined as in \cite{beauv}: either by the values of $h^{p,0}$, or by the holonomy of any Ricci-flat K\"ahler metric. We need the projectivity of $X$ because the K\"ahler version of \cite{CP19} is not known.Our proof also does not show the finiteness of the fundamental groups of symplectic or Calabi-Yau manifolds. A partial solution is given to this finiteness property in Proposition \ref{pi-1} below, based on more general $L^2$-methods.
\end{remark}

\begin{proof}(of theorem \ref{smooth}) We equip $X$ with any Ricci-flat K\"ahler metric (\cite{yau}). Let $Hol^0$ (resp. $Hol)$ be its restricted holonomy (resp. holonomy) representation, and $T_X=F\oplus (\oplus_i T_i)$ be a (local near any given point of $X$) splitting of the tangent bundle of $X$ into factors which are irreducible for the action of $Hol^0$. These local factors correspond also to a local splitting of $X$ into a direct product of K\"ahler submanifolds. In particular, these local products are regular holomorphic foliations. Here $F$ is the `flat' factor consisting of the holonomy-invariant tangent vectors. Now $Hol^0$ is a normal subgroup of $Hol$, and $Hol/Hol^0$ acts by permutation on the factors of the restricted holonomy decomposition. Because the action of $Hol/Hol^0$ is induced by a representation $\pi_1(X)\to Hol/Hol^0$, the local holonomy decomposition of $T_X$ above holds globally on a suitable finite \'etale cover of $X$.

We now replace $X$ by such a finite \'etale cover, and obtain a global product decomposition $T_X=F\oplus(\oplus_iT_i)$ by regular holomorphic foliations, the restricted holonomy of $F$ being trivial, while the ones of the $T_i$ are irreducible and of the form either $SU(m_i)$, or $Sp(k_i)$.

\begin{lemma}\label{c1=0} Let $T_X=\oplus_jE_j$ be a direct sum decomposition by foliations $E_j$, with $c_1(X)=0$. Then $c_1(E_j)=0,\forall j$. \end{lemma}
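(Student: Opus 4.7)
The plan is to combine the theorem of Campana--Paun \cite{CP19} on pseudo-effectivity of foliation canonical classes with the pointedness of the pseudo-effective cone on $X$.

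First I would take determinants in the decomposition: the identity $\det T_X \cong \bigotimes_j \det E_j$ yields $\sum_j c_1(E_j) = c_1(T_X) \equiv 0$, so the classes $\beta_j := -c_1(E_j)$ satisfy $\sum_j \beta_j \equiv 0$ in $N^1(X)_\bR$.

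Next I would invoke \cite{CP19}: since $X$ is projective with $K_X \equiv 0$, in particular pseudo-effective, the canonical class $K_{E_j} = -c_1(E_j) = \beta_j$ of each foliation $E_j$ is pseudo-effective. From $\sum_j \beta_j \equiv 0$, fixing any index $i$ gives $-\beta_i \equiv \sum_{j\neq i}\beta_j$, a finite sum of pseudo-effective classes, hence pseudo-effective. Thus both $\beta_i$ and $-\beta_i$ lie in $\overline{\mathrm{Eff}}(X)$.

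The final step invokes the pointedness of the pseudo-effective cone of a projective variety: a class that lies, together with its opposite, in $\overline{\mathrm{Eff}}(X)$ is numerically trivial. Intersecting with $H^{n-1}$ for an ample class $H$ forces $\beta_i \cdot H^{n-1} = 0$; pairing with a full-dimensional family of movable classes via BDPP duality then yields $\beta_i \equiv 0$. Hence $c_1(E_j) \equiv 0$ for every $j$, as claimed.

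The only substantial input is \cite{CP19}; the remaining steps are formal. The conceptual obstruction one would face without \cite{CP19} is that the single relation $\sum c_1(E_j) \equiv 0$ cannot, on its own, pin down the individual summands, so a positivity property attached to each $E_j$ separately is indispensable; the pseudo-effectivity of $K_{E_j}$ is exactly this missing ingredient, and it is what forces the hypothesis that $X$ be projective rather than merely K\"ahler.
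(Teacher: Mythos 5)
Your proof is correct and rests on exactly the same essential input as the paper's: the Campana--P\u aun theorem of \cite{CP19} applied to each summand, combined with the linear relation $\sum_j c_1(E_j)=0$ coming from $\det T_X=\otimes_j\det E_j$. The only difference is packaging: the paper argues by contradiction with a single polarization (a positive slope $c_1(E_h)\cdot H^{n-1}>0$ produces, via Lemma 4.10 and Theorem 4.1 of \cite{CP19}, a subfoliation of positive minimal slope and hence the non-pseudo-effectivity of $K_X$), whereas you invoke the packaged corollary that each $K_{E_j}$ is pseudo-effective and conclude with the salience of the pseudo-effective cone --- a standard fact that both arguments implicitly need in order to pass from ``$c_1(E_j)\cdot H^{n-1}$ vanishes for all polarizations'' to ``$c_1(E_j)=0$''.
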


\begin{proof} Assume not, and let $H$ be a polarisation on $X$, with $n:=dim(X)$. Then $c_1(E_j).H^{n-1}\neq 0$, for some $j$. Since $\sum_jc_1(E_j).H^{n-1}=0$, we get: $c_1(E_h).H^{n-1}>0$ for some $h$. It then follows from \cite{CP19}, Lemma 4.10, that $E_h$ contains a subfoliation $G$ with $\mu_{H,min}(G)>0$, and by \cite{CP19}, Theorem 4.1, that $K_X$ is not pseudo-effective, contrary to the hypothesis $c_1(T_X)=0$.\end{proof}

From the preceding lemma \ref{c1=0}, if $T_X=F\oplus(\oplus_i T_i)$ is the holonomy decomposition of $T_X$ considered above for $X$ smooth projective with $c_1(X)=0$, we get that $c_1(F)=c_1(T_i)=0,\forall i$.

\begin{lemma} The dual $T_i^{\star}$ of each $T_i$ is not pseudo-effective (which means that, for any polarisation $H$, and any given $k>0$, $h^0(X,Sym^m(T_i^{\star})\otimes H^k)=\{0\}$ for $m\geq m(k))$.
\end{lemma}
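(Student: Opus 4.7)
The plan is to combine Bochner's principle applied to $\Sym^m T_i^*$ with the representation theory of the irreducible holonomy groups $SU(m_i)$ and $Sp(k_i)$, and then leverage this together with the principle ``pseudo-effective $+$ $c_1=0$ $\Rightarrow$ numerically flat'' to obtain the full statement.

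By Lemma~\ref{c1=0}, $c_1(T_i)=0$. The Ricci-flat K\"ahler metric $\omega$ restricts to a Hermitian metric on $T_i$ whose Chern connection coincides with the restriction of the Levi-Civita connection of $\omega$ (since $T_i$ is parallel), and the inclusion $Hol^0(T_i)\subset SU(m_i)$ forces the trace of its curvature to vanish. Thus $T_i$, and hence $\Sym^m T_i^*$, carries a Hermite--Einstein metric with vanishing Einstein constant. By Bochner's principle, every holomorphic section of $\Sym^m T_i^*$ is parallel and, via evaluation at a base point $x$, corresponds to a $Hol$-invariant element of $\Sym^m T_{i,x}^*$. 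For the standard representations of $SU(m_i)$ on $\C^{m_i}$ and of $Sp(k_i)$ on $\C^{2k_i}$, the $m$-th symmetric powers are irreducible non-trivial for every $m\geq 1$, so their invariant subspaces vanish. This gives $H^0(X,\Sym^m T_i^*)=0$ for all $m\geq 1$.

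For the twisted statement I would argue by contradiction. If $T_i^*$ were pseudo-effective in the sense defined, then the tautological line bundle $\cO_{\P(T_i^*)}(1)$ would be pseudo-effective on $\P(T_i^*)$. Together with $c_1(T_i^*)=0$ and the numerical triviality of $K_X$, a theorem of H\"oring forces such a sheaf to be numerically flat; the Simpson--Deng--Hao correspondence then endows $T_i^*$ with a flat Hermitian structure, so its Chern connection (which is the dual of the one above) is flat. This contradicts the hypothesis that the restricted holonomy of $T_i$ is the positive-dimensional Lie group $SU(m_i)$ or $Sp(k_i)$, whose Lie algebra must contain the non-zero curvature.

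The main obstacle is to invoke cleanly the chain ``pseudo-effective $+$ $c_1=0$ $\Rightarrow$ numerically flat $\Rightarrow$ Hermitian flat''. A more hands-on alternative, modelled on the proof of Lemma~\ref{c1=0}, is the following: a non-zero section of $\Sym^m T_i^* \otimes H^k$ yields a saturated sub-line bundle $L_m=H^{-k}(D_m)\subset \Sym^m T_i^*$ with $D_m\geq 0$, and dually a rank-one quotient of $\Sym^m T_i$ of $H$-slope $kH^n-D_m\cdot H^{n-1}$. Using the characteristic zero identity $\mu_{H,\min}(\Sym^m T_i)=m\,\mu_{H,\min}(T_i)$ and letting $m\to\infty$, one finds $\mu_{H,\min}(T_i)\leq 0$. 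If $T_i$ is not $H$-semistable, saturation inside $T_X$ and \cite{CP19}, Lemma~4.10, produce a subfoliation $G\subset T_X$ with $\mu_{H,\min}(G)>0$, and \cite{CP19}, Theorem~4.1, then contradicts $c_1(X)=0$. The remaining $H$-semistable case must be handled by invoking the invariant-theoretic rigidity of the first paragraph to preclude any line bundle summand in the polystable decomposition of $\Sym^m T_i^*$; this is the delicate point.
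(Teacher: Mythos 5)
Your first paragraph is sound and is essentially the paper's own first step: the irreducibility of the symmetric powers of the standard representations of $SU(m_i)$ and $Sp(k_i)$ (the content of the reference to \cite{fh}) gives, via the Bochner principle, both $H^0(X,\Sym^m T_i^{\star})=0$ and the $H$-stability of every $\Sym^m T_i$. The gap is in the twisted statement, which is the actual content of the lemma. The implication you attribute to H\"oring, that pseudo-effective together with $c_1=0$ (even with $K_X\equiv 0$) forces numerical flatness, is false for a general vector bundle: on a K3 surface take $E=\cO_X\oplus\cO_X(D)\oplus\cO_X(-D)$ with $D$ a nonzero effective divisor; then $\Sym^m E$ contains $\cO_X(mD)$, so $\cO_{\P(E)}(1)$ is pseudo-effective and $c_1(E)=0$, yet $E$ is not numerically flat since it has the non-nef quotient $\cO_X(-D)$. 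What \cite{hp}, Theorem 1.1 and \cite{CCP}, Theorem 1.3 actually provide is weaker --- a pseudo-effective reflexive sheaf with $c_1\cdot H^{n-1}=0$ contains a nonzero flat piece --- and converting that into a contradiction with the irreducible holonomy is exactly where the stability of all the symmetric powers enters. So the theorem you need is precisely the one the paper cites, and your restatement of it is not a theorem.

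Your hands-on alternative does not close the gap either, and it stops exactly where all the work lies. The slope estimate only yields $\mu_{H,\min}(T_i)\le 0$, which is vacuous because $\mu_H(T_i)=0$ already forces this; moreover $T_i$ is $H$-stable, so the non-semistable branch never occurs. In the stable case, the saturated sub-line bundle $L_m=H^{-k}(D_m)\subset\Sym^m T_i^{\star}$ has slope $-kH^n+D_m\cdot H^{n-1}$, which may well be negative; stability of $\Sym^m T_i^{\star}$ (of slope $0$) only excludes subsheaves of nonnegative slope, hence does not preclude such an $L_m$, and the invariant-theoretic rigidity of the first paragraph says nothing about non-parallel subsheaves of negative slope. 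Ruling out these sections for $m\gg k$ is precisely the nontrivial input of \cite{CCP}, Theorem 1.3 (positivity of direct images) or \cite{hp}, Theorem 1.1 (MMP techniques), which the paper invokes as a black box after the stability step. As written, your proposal establishes the untwisted vanishing but not the pseudo-effectivity statement.
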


\begin{proof} We proceed in two steps. From \cite{fh}, \S15.3 and Proposition 24.22 follows that $Sym^m(T_i),\forall i,\forall m>0$ is an irreducible representation, hence stable. Next, \cite{CCP}, Theorem 1.3 (or alternatively \cite{hp}, Theorem 1.1) implies that $T_i^{\star}$ is not pseudo-effective for each $i$.
\end{proof}

From \cite{CP19}, Theorem 4.2, Lemma 4.6, we now get the first claim of the next result\footnote{Although not explicitely stated in \cite{CP19}, this is a main step of the proof of 4.2, and suggested by the proof of Lemma 4.6 there. The explicit formulation was first given in \cite{SD}, \S 8. Since only the particular case of a polarisation $H^{n-1}$ is used here, one could even alternatively apply \cite{bmq}.}

\begin{lemma} Each of the foliations $T_i$ has algebraic leaves, which are compact\footnote{By contradiction: if not, the leaf through a regular point of the boundary of the closure of a leaf should be contained in this boundary, and of the same dimension. In the singular case, this compactness fails, and more delicate arguments are required.} since $T_i$ is everywhere regular and $X$ is smooth. Thus $T_i$ defines a smooth (proper) fibration $f_i:X\to B_i$ on a smooth projective base $B_i$. Each of these fibrations is locally trivial with fibre $F_i$, and becomes a product $X'=F_i\times B_i'$ after a suitable finite \'etale base-change $B_i'\to B_i$.
\end{lemma}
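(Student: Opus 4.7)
The plan is four-fold. For \emph{algebraicity} of the leaves of $T_i$, I would apply \cite{CP19}, Theorem 4.2 and Lemma 4.6 together with the preceding lemma: since $T_i^{\star}$ is not pseudo-effective, the criterion of \emph{loc.\ cit.}\ forces $T_i$ to have algebraic leaves. For \emph{compactness}: if a leaf $L$ were not closed, pick a smooth point $y$ of $\overline L$ lying in $\overline L\setminus L$; since $T_i \subset T_X$ is a regular subbundle, the leaf $L'$ through $y$ has dimension $\dim L = \dim \overline L$ and sits in $\overline L$ near $y$, so it is open in $\overline L$, must meet the dense $L$, and contradicts the disjointness of distinct leaves. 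Thus leaves are compact of constant dimension, and the associated leaf-space map yields a proper submersion $f_i : X \to B_i$ onto an algebraic quotient $B_i$, with $B_i$ smooth because $f_i$ is submersive with smooth source, and $B_i$ projective because its points sit in the Chow variety of $X$.

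For \emph{local triviality}: the K\"ahler holonomy decomposition makes the complementary distribution $E_i := F \oplus \bigoplus_{j\neq i} T_j$ a \emph{parallel} holomorphic subbundle of $T_X$, hence integrable, everywhere transverse to $T_i$ and of complementary rank. It therefore provides a flat holomorphic Ehresmann connection for $f_i$. As $f_i$ is proper with compact fibres, integrating this connection over any simply-connected $U \subset B_i$ produces biholomorphisms $f_i^{-1}(U) \cong F_i \times U$ compatible with $f_i$, making $f_i$ locally holomorphically trivial.

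The \emph{finite \'etale splitting} is where the main content lies. By Bochner's vanishing on the compact Ricci-flat K\"ahler manifold $F_i$, any holomorphic vector field on $F_i$ is parallel; irreducibility of the restricted holonomy $SU(m_i)$ or $Sp(k_i)$ then forces such a field to vanish, so $H^0(F_i, T_{F_i}) = 0$ and in particular $\mathrm{Aut}^0(F_i)$ is trivial. I would then apply the paper's splitting criterion Theorem \ref{algcrit} to the locally trivial $f_i$ whose fibre $F_i$ admits no non-zero holomorphic vector fields: this yields the finite \'etale base change $B_i'\to B_i$ over which $X\times_{B_i}B_i'\cong F_i\times B_i'$. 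The main obstacle will be the invocation of Theorem \ref{algcrit}: it is formulated in greater generality (generically locally trivial fibrations, yielding a bimeromorphic splitting), and the task is to exploit the actual local triviality of $f_i$ and the algebraicity of $B_i$ to promote its output to a genuine finite \'etale product structure.
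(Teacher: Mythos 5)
Your proposal is correct and follows essentially the same route as the paper: algebraicity via \cite{CP19} using the non-pseudoeffectivity of $T_i^{\star}$, compactness by the same boundary-leaf contradiction, local triviality from the complementary parallel (hence integrable) distribution acting as an Ehresmann connection, and the splitting from $h^0(F_i,T_{F_i})=0$ (Bochner plus irreducible holonomy). The "obstacle" you flag at the end is not one here: since $f_i$ is locally trivial over all of $B_i$ (i.e.\ $Y_0=Y$ in the notation of Theorem \ref{algcrit}), the map $\vartheta$ is finite \'etale everywhere and the bimeromorphic map $\delta$ is already an isomorphism, giving the genuine product directly.
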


\begin{proof} Second claim: let $C_i:=F\oplus(\oplus_{\neq i}T_i)$ be the complement in $T_X$ of $T_i$. This defines locally over $B_i$ a regular holomorphic foliation which is transversal to $f_i$, and thus shows that $f_i$ is locally isotrivial over $B_i$. Third claim: it is sufficient to know that $Aut(F_i)$ is dicrete, or that $h^0(F_i,T_{F_i})=0$. But this is easy, since $F_i$ is a projective manifold with $c_1=0$ and irreducible non-trivial holonomy, which thus does not leave any tangent vector invariant, which implies the claimed vanishing by the Bochner principle.
\end{proof}

Consider any one of the projections $f_i:F_i\times B_i\to B_i$ (after a suitable finite \'etale cover). Then $c_1(B_i)=0$, and its holonomy decomposition is $F\oplus(\oplus_{j\neq i}T_i)$. Proceeding inductively on $dim(X)$, we obtain a decomposition in a product $X=(\times_i F_i)\times B$, where $B$ is smooth projective with $c_1(B)=0$ and trivial holonomy $F$.

The next lemma then concludes the proof of Theorem \ref{smooth}.  \end{proof}

\begin{lemma} (\cite{bieber}) Let $X$ be a connected compact K\"ahler manifold with $c_1(X)=0$, and with trivial restricted holonomy representation (relative to some Ricci-flat K\"ahler metric). Then $X$ is covered by a torus.\end{lemma}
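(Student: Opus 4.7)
The plan is to reduce to the classical Bieberbach theorem on crystallographic groups after identifying the universal cover isometrically with flat $\C^n$.

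First, I would observe that trivial restricted holonomy means that parallel transport around any null-homotopic loop is the identity, so the Ricci-flat K\"ahler metric $g$ is in fact \emph{locally flat} (its curvature tensor vanishes identically). Since $X$ is compact, it is metrically complete, and hence so is its universal cover $\pi: \wt X \to X$ with the pulled-back metric $\wt g$. The simply connected, complete, flat K\"ahler manifold $(\wt X, \wt g)$ is isometric (and, because the structures are parallel, biholomorphically isometric) to $(\C^n, g_{\mathrm{std}})$ with its standard flat K\"ahler structure. This is the classical fact that a simply connected complete manifold with vanishing curvature tensor is Euclidean space (an application of Cartan--Ambrose--Hicks, or directly of the exponential map being a global isometry).

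Next, the deck transformation group $\Gamma := \pi_1(X)$ acts on $\C^n$ by holomorphic isometries, properly discontinuously and cocompactly (because $X = \C^n/\Gamma$ is compact). Every holomorphic isometry of $(\C^n, g_{\mathrm{std}})$ is of the form $z \mapsto Az + b$ with $A \in U(n)$ and $b \in \C^n$, so $\Gamma$ embeds into the group $U(n) \ltimes \C^n$ of complex affine isometries.

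At this point the first Bieberbach theorem \cite{bieber} applies: any discrete, cocompact subgroup $\Gamma$ of $O(2n) \ltimes \R^{2n}$ contains the subgroup $\Lambda := \Gamma \cap \C^n$ of pure translations as a normal subgroup of finite index, and $\Lambda$ is a lattice of full rank $2n$ in $\C^n$. The quotient $T := \C^n/\Lambda$ is then a compact complex torus (the action is by holomorphic translations, so the complex structure descends), and $T \to X$ is a finite \'etale cover because $\Lambda \subset \Gamma$ has finite index. This is exactly the conclusion.

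The main (and only) non-trivial point is really the identification of $(\wt X, \wt g)$ with flat $\C^n$; everything else is linear algebra and the quoted Bieberbach theorem. One could alternatively phrase the identification by noting that, under trivial restricted holonomy, the parallel translation gives a flat holomorphic trivialization of $T_{\wt X}$, integrable by Frobenius to a holomorphic isometry to $\C^n$.
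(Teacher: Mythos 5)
Your proof is correct and is exactly the argument the paper intends: the paper gives no proof of this lemma at all, merely citing Bieberbach, and your chain (trivial restricted holonomy $\Rightarrow$ vanishing curvature by Ambrose--Singer $\Rightarrow$ universal cover biholomorphically isometric to flat $\C^n$ $\Rightarrow$ first Bieberbach theorem gives a finite-index full-rank translation lattice $\Lambda\subset\Gamma$ $\Rightarrow$ the torus $\C^n/\Lambda$ finitely covers $X$) is the standard derivation that the citation is meant to invoke. No gaps; this matches the paper's (implicit) approach.
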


%%%%%%%%%%%%%%%%%%%%%%%%%%%%%%%%%%%%%%%%%%%%%%%%%%%%%%%
%%%%%%%%%%%%%%%%%%%%%%%%%%%%%%%%%%%%%%%%%%%%%%%%%%%%%%%
%%%%%%%%%%%%%%%%%%%%%%%%%%%%%%%%%%%%%%%%%%%%%%%%%%%%%%%
%%%%%%%%%%%%%%%%%%%%%%%%%%%%%%%%%%%%%%%%%%%%%%%%%%%%%%%

The  Symplectic and the even-dimensional Calabi-Yau manifolds can be shown to have a finite fundamental group by $L^2$-methods which extend to the singular case. Another approach is given right after, which works more generally, for compact Riemannian manifolds with nonnegative Ricci curvature and vanishing `maximal $b_1$', but does not extend in an obvious way to the singular case.

\begin{proposition}\label{pi-1} Let $X$ be a connected compact K\"ahler manifold with $c_1(X)=0$ with $\chi(\mathcal{O}_X)\neq 0$. Then $\pi_1(X)$ is finite.
\end{proposition}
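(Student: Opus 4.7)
My plan is to reduce, via the Cheeger--Gromoll splitting theorem and Bieberbach's theorem, to the situation in which some finite \'etale cover of $X$ is a holomorphic product $A\times N$ of a complex torus $A$ and a simply connected compact K\"ahler factor $N$. The hypothesis $\chi(\OO_{X})\ne 0$ will then force $\dim A=0$ through a K\"unneth computation, hence $\pi_{1}(X)$ finite.

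First I would equip $X$ with a Ricci-flat K\"ahler metric by Yau's theorem \cite{yau}. Applied to the universal cover $\wt X$, the Cheeger--Gromoll splitting theorem, combined with the fact (as in \cite{beauv}) that in the K\"ahler setting this metric splitting is automatically holomorphic, gives a holomorphic isometric decomposition $\wt X=\C^{s}\times N$ with $N$ simply connected. Bieberbach's theorem, applied to the action of $\pi_{1}(X)$ on the flat factor $\C^{s}$, then produces a finite \'etale cover $f\colon X'\to X$ of some degree $d$ that splits holomorphically as $X'=A\times N$ with $A=\C^{s}/\Lambda$ a complex torus of dimension $s\ge 0$; in particular $N$ becomes compact, being a fibre of the projection $X'\to A$.

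The rest is a direct Euler-characteristic computation. The K\"unneth formula gives
\[
\chi(\OO_{X'})=\chi(\OO_{A})\cdot\chi(\OO_{N}),
\]
and the identity $h^{0,p}(A)=\binom{s}{p}$ for a torus of dimension $s$ yields $\chi(\OO_{A})=(1-1)^{s}$, which vanishes as soon as $s>0$. On the other hand, since $f$ is \'etale, Grothendieck--Riemann--Roch applied to $f_{\ast}\OO_{X'}$ shows that its Chern character equals $d$ (the Todd contribution disappears because the relative tangent bundle is zero), so by Leray one has $\chi(\OO_{X'})=d\cdot\chi(\OO_{X})$. Combining both identities, the hypothesis $\chi(\OO_{X})\ne 0$ forces $\chi(\OO_{A})\ne 0$, whence $s=0$, $X'=N$, and $\pi_{1}(X')=\{1\}$; therefore $\pi_{1}(X)$ is finite.

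The only non-formal ingredient is the first step: it rests on the classical Cheeger--Gromoll/Bieberbach picture for compact Ricci-flat K\"ahler manifolds, which is also exactly the reason why the argument does not extend in an obvious way to the singular setting, since a global smooth Ricci-flat K\"ahler metric on $X$ is no longer available there.
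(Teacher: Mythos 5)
Your argument is correct, but it follows a genuinely different route from the paper. You reduce to the classical Beauville picture: Yau's metric, the De Rham/Cheeger--Gromoll splitting of the universal cover into $\C^{s}\times N$ with $N$ compact simply connected, Bieberbach's theorem to produce a finite \'etale cover $A\times N$, and then the K\"unneth computation $\chi(\OO_{A\times N})=\chi(\OO_A)\chi(\OO_N)$ with $\chi(\OO_A)=(1-1)^s$ killing the torus factor; together with the multiplicativity $\chi(\OO_{X'})=d\,\chi(\OO_X)$ this is a complete and correct proof in the smooth K\"ahler case. The paper, however, deliberately avoids exactly this machinery (its stated aim is a proof not passing through De Rham and Cheeger--Gromoll), and instead gives two arguments both resting on Atiyah's $L^2$-index theorem \cite{A}: the first via \cite{Ca}, reducing finiteness of $\pi_1(X)$ to $\kappa(X,\det\mathcal F)\le 0$ for all $\mathcal F\subset\Omega^p_X$, which follows from semistability of $\Sym^m\Omega^p_X$; the second by showing that every $L^2$-holomorphic $p$-form on a noncompact universal cover $X'$ is parallel (Bochner plus Gaffney's integration trick), hence descends and must vanish, so that $0=\chi_{(2)}(X',\OO_{X'})=\chi(X,\OO_X)\ne 0$ gives a contradiction, forcing $X'$ to be compact. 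What your approach buys is elementarity relative to the classical literature and an explicit structural splitting; what the paper's approach buys is precisely what it needs for the sequel, namely an argument that transfers to klt varieties with $K_X\equiv 0$ (Theorem \ref{pi-1"}), where no global smooth Ricci-flat metric, and hence no Riemannian splitting theorem, is available --- a limitation you correctly acknowledge in your last sentence.
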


\begin{proof} We give two proofs, both relying on \cite{A}

{\bf First proof}. This is the proof given in \cite{Ca}, Corollary 5.3, and Remark 5.5. By \cite{Ca}, Theorem 4.1, it is sufficient to show that $\kappa^+(X)\leq 0$, that is: $\kappa(X,det(F))\leq 0$, for any subsheaf $F\subset \Omega^p_X,\forall p>0$, which follows from the semi-stability of the symmetric powers of $\Omega^p_X$. 

{\bf Second proof}:  if $X'\to X$ is the universal cover, and $h$ an $L^2$-holomorphic $p$-form on $X'$, then $h$ is parallel (because the Laplacian of its squared norm equals the square norm of its covariant derivative, and so is nonnegative everywhere. Gaffney's integration trick implies that the Laplacian identically vanishes, since $h$ is $L^2$ and $X'$ is complete). Thus $h$ comes from $X$, and so vanishes if $X'$ is noncompact. By \cite{A}, one gets: $0=\sum_{p\in \{0,n\}}(-1)^ph_{(2)}^0(X',\Omega^p_{X'})=\chi_{(2)}(X',\mathcal O_{X'})=\chi(X,\mathcal O_{X'})\neq 0$, a contradiction.
\end{proof}

\begin{corollary}\label{pi-1'} If $X$ is a compact K\"ahler manifold of dimension $n$, and irreducible symplectic (resp. Calabi-Yau of even dimension), then $\pi_1(X)$ is finite of cardinality dividing $(\frac{n}{2}+1)$ (resp. $2)$.
\end{corollary}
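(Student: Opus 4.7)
The plan is to establish $\chi(\mathcal O_X)\neq 0$ so that Proposition~\ref{pi-1} applies, and then to compare $\chi(\mathcal O_X)$ with $\chi(\mathcal O_{\tilde X})$ for the universal cover $\tilde X\to X$ via the multiplicativity $\chi(\mathcal O_{\tilde X}) = d\cdot\chi(\mathcal O_X)$ of the holomorphic Euler characteristic under finite \'etale covers, where $d = |\pi_1(X)|$. Both Euler characteristics will be computed from holonomy via the Bochner principle: a Ricci-flat K\"ahler metric exists on $X$ by \cite{yau} since $c_1(X) = 0$, and every holomorphic $p$-form is then parallel, so $h^{p,0}$ equals the dimension of the space of $Hol$-invariant exterior $p$-forms at a base point.

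First I would verify $\chi(\mathcal O_X) > 0$. In the irreducible symplectic case $n = 2k$, the restricted holonomy $Hol^0(X) = Sp(k)$ has invariants $\C\sigma^{p/2}$ in $\Lambda^p T_x^{\star}$ for $p$ even and none for $p$ odd; the full holonomy invariants form a subspace, so $h^{p,0}(X) \in \{0, 1\}$ with $h^{p,0}(X) = 0$ for $p$ odd and $h^{0,0}(X) = 1$, whence $\chi(\mathcal O_X) \geq 1$. In the Calabi-Yau case of even dimension $n$, one has $Hol^0(X) = SU(n)$ whose only invariants in $\Lambda^{\bullet} T^{\star}$ sit in degrees $0$ and $n$, so $\chi(\mathcal O_X) = 1 + h^{n,0}(X) \in \{1, 2\}$.

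With $\chi(\mathcal O_X)\neq 0$ in hand, Proposition~\ref{pi-1} yields that $\pi_1(X)$ is finite, so $\tilde X$ is compact, simply connected, K\"ahler, with $c_1(\tilde X) = 0$. Because $\tilde X$ is simply connected, $Hol(\tilde X) = Hol^0(\tilde X) = Hol^0(X)$, and the Bochner computation pins down the Hodge numbers of $\tilde X$ exactly: $h^{p,0}(\tilde X) = 1$ iff $p$ is even in the symplectic case, giving $\chi(\mathcal O_{\tilde X}) = k + 1 = n/2 + 1$; and $h^{0,0}(\tilde X) = h^{n,0}(\tilde X) = 1$ with the rest vanishing in the Calabi-Yau case, giving $\chi(\mathcal O_{\tilde X}) = 2$.

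Finally, multiplicativity under the degree-$d$ \'etale cover gives $d\cdot\chi(\mathcal O_X) = \chi(\mathcal O_{\tilde X})$. Since $\chi(\mathcal O_X)$ is a positive integer, $d$ divides $\chi(\mathcal O_{\tilde X})$, which is $n/2 + 1$ in the symplectic case and $2$ in the Calabi-Yau case, proving the corollary. The only delicate point is the identification $Hol(\tilde X) = Hol^0(X)$, a standard fact about pullbacks of metrics to the universal cover; once this is in place, the rest is a direct representation-theoretic calculation of $Sp(k)$- and $SU(n)$-invariants in exterior algebras.
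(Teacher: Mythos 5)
Your proposal is correct and follows essentially the same route as the paper: apply Proposition~\ref{pi-1} to get a compact universal cover, compute $\chi(\mathcal O_{\tilde X})$ from the holonomy invariants, and conclude by multiplicativity of $\chi(\mathcal O)$ under finite \'etale covers. The only difference is that you make explicit the verification $\chi(\mathcal O_X)\neq 0$ (via the inclusion of $Hol$-invariants into $Hol^0$-invariants), which the paper leaves implicit when it asserts the universal cover is compact.
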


\begin{proof} Let $X'\to X$ be the (compact) universal cover of $X$, of degree $d$. We have then $\chi(\mathcal{O}_{X'})=d.\chi(\mathcal{O}_X)$. On the other hand, $X'$ is still irreducible symplectic (resp. Calabi-Yau), and so we have: $\chi(\mathcal{O}_{X'})=\sum_{p=0}^{p=\frac{n}{2}}(-1)^{2p}h^0(X',\Omega^{2p}_{X'})=\frac{n}{2}+1$ (resp. $\chi(\mathcal{O}_{X'})=\sum_{p\in \{0,n\}}(-1)^ph^0(X',\Omega^p_{X'})=2)$.
\end{proof}

The following argument works for any odd dimensional Calabi-Yau manifolds, but does not immediately extends to the singular case.

\begin{proposition} Let $M$ be a compact connected Riemannian manifold with nonegative Ricci curvature such that $b_1(M')=0$ for any finite \'etale cover $M'$ of $M$. The fundamental group of $M$ is finite.
\end{proposition}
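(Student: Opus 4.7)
The plan is to invoke the Cheeger-Gromoll structure theorem for compact Riemannian manifolds with nonnegative Ricci curvature. That theorem produces a finite normal Riemannian covering $M'\to M$ which is isometric to a Riemannian product $T^k\times N$, where $T^k$ is a flat torus (of some dimension $k\geq 0$) and $N$ is a compact simply connected Riemannian manifold, still with nonnegative Ricci curvature. The flat factor corresponds to the Euclidean factor in the De Rham decomposition of the universal cover $\tilde{M}$, identified via the Bochner formula with the space of parallel vector fields on a suitable finite cover, combined with Bieberbach's theorem applied to the translational part of the deck group acting on that factor.

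With this splitting in hand, the Künneth formula gives $b_1(M')=b_1(T^k)+b_1(N)=k+0=k$. The standing hypothesis asserts that $b_1(M')=0$ for \emph{every} finite étale cover of $M$, in particular for the cover produced above, and therefore forces $k=0$. Hence $M'=N$ is simply connected, so $\pi_1(M)$ contains $\pi_1(M')=\{1\}$ as a finite-index subgroup and is finite.

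The only substantive input is the Cheeger-Gromoll structure theorem; the rest is essentially the computation $b_1(T^k)=k$. There is no real obstacle once this black box is invoked, although one should check that the finite cover produced by Cheeger-Gromoll is indeed étale (it is, being a Riemannian covering of a smooth compact manifold) so that the hypothesis applies to it. As remarked just before the statement, it is precisely this splitting step that does not transfer to the klt setting, since a Cheeger-Gromoll/Bieberbach type decomposition for singular Ricci-flat spaces is not presently available; this is why the singular case of the decomposition theorem must be handled by the algebraic arguments of the earlier sections rather than by this Riemannian shortcut.
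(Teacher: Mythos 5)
Your argument is correct in substance but follows a genuinely different route from the paper. The paper does not invoke the Cheeger--Gromoll structure theorem at all: it cites Milnor's volume-comparison argument to get polynomial growth of $\pi_1(M)$, then Gromov's theorem to conclude that $\pi_1(M)$ is virtually nilpotent, passes to a finite cover $M'$ whose fundamental group is nilpotent and torsion-free, and observes that a nontrivial finitely generated torsion-free nilpotent group has abelianisation of positive rank, contradicting $b_1(M')=0$. Your approach trades Gromov's (deep) polynomial-growth theorem for the (also deep, but more classical and more geometric) splitting theorem: you extract the Euclidean De Rham factor $\mathbb{R}^k$ of the universal cover, use Bieberbach on the crystallographic image of the deck group in $\mathrm{Iso}(\mathbb{R}^k)$ to produce a finite cover $M'$ with $b_1(M')\geq k$, force $k=0$, and conclude that the universal cover is compact. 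Both proofs are valid and of comparable length; the paper's is purely group-theoretic once the growth estimate is in hand, while yours yields more geometric information (the actual splitting) when $k>0$. One caveat on your statement of the black box: the finite cover produced by Cheeger--Gromoll is in general only \emph{diffeomorphic}, not isometric, to $T^k\times N$ --- isometric splitting can fail when the compact factor has a positive-dimensional isometry group (e.g.\ the mapping torus of an irrational rotation of a round $S^3$ admits no finite cover isometric to $S^1\times S^3$); the cleaner statement to quote is that some finite cover $M'$ satisfies $b_1(M')=k$, which is all your Betti-number computation needs, and when $k=0$ one concludes not that $M'$ is simply connected but that $\widetilde{M}$ is compact, whence $\pi_1(M)$ is finite. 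Your closing remark about why this route does not descend to the klt setting agrees with the paper's own comment preceding the proposition.
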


\begin{proof} By \cite{Mil}, the growth of $\pi_1(M)$ is polynomial (of degree bounded by the dimension of $M)$. From \cite{Gr'}, $\pi_1(M)$ is virtually nilpotent. Thus $\pi_1(M')$ is nilpotent and torsionfree for some finite \'etale cover $M'$ of $M$. Thus $\pi_1(M')$ is either trivial, or has an abelianisation of positive rank. Since $b_1(M')=0$, $\pi_1(M')=\{1\}$, hence the claim.\end{proof}

%%%%%%%%%%%%%%%%%%%%%%%%%%%%%%%%%%%%%%%%%%%%%%%%%%%%%%%
%%%%%%%%%%%%%%%%%%%%%%%%%%%%%%%%%%%%%%%%%%%%%%%%%%%%%%%
%%%%%%%%%%%%%%%%%%%%%%%%%%%%%%%%%%%%%%%%%%%%%%%%%%%%%%%
%%%%%%%%%%%%%%%%%%%%%%%%%%%%%%%%%%%%%%%%%%%%%%%%%%%%%%%

\section{The singular version}

\noindent Let $X$ be a complex projective variety with klt singularities whose first Chern class is zero, i.e. $c_1(X)= 0$. By \cite{Nak}, Chap. V, Corollary 4.9, the condition $K_X\equiv 0$ implies that $K_X$ is $\Bbb Q$-trivial. We may, and shall, assume, by passing to an index-one cover, that the singularities of $X$ are canonical and that $K_X$ is trivial. (Instead of \cite{Nak}, one could use when the singularities are canonical, either \cite{kawa}, Thm 8.2, or \cite{CPe}, Thm 3.1 applied to a resolution of $X$).

We denote by $\omega$ the unique Ricci-flat metric of $X$ which belongs to a given K\"ahler class (\cite{EGZ}). We will see now that the steps of the previous proof extend to the singular context, using the results from \cite{GGK}, \S 8,9 and \cite{SD}, Prop. 4.10 and Prop. 3.13. The single new input here is the algebraicity criterion for foliations \ref{algcrit} below, which makes superfluous the characteristic $p>0$ methods and results used in \cite{SD}. The results of \cite{GKP} and \cite{kawa} used in \cite{SD} are also no longer needed.

\begin{theorem}\label{BBY}(\cite{hp}) Let $X$ be a normal complex variety with klt singularities and with $c_1(T_X)=0$. There exists a quasi-\'etale cover $f:\widetilde{X}\to X$ with canonical singularities which is a product $\widetilde{X}=\Pi_jY_j\times A$, where $A$ is an Abelian variety, and the $Y_j's$ are varieties with canonical singularities, trivial canonical bundle, and irreducible restricted holonomy either $Sp(k_j)$, or $SU(m_j)$ (see \S 3.1 below). The $Y_j'$ are respectively said to be irreducible symplectic (resp. Calabi-Yau).\end{theorem}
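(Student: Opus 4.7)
The plan is to imitate the four-step strategy of the proof of Theorem \ref{smooth} in the singular setting, using the singular Ricci-flat metric from \cite{EGZ} in place of Yau's theorem and the holonomy theory of \cite{GGK} in place of the classical holonomy decomposition, with a new algebraicity/splitting input (Theorem \ref{algcrit}) replacing the characteristic $p$ arguments of \cite{SD}. More precisely, I would begin by endowing $X$ with the singular Ricci-flat K\"ahler metric $\omega$, and apply \cite{GGK}, \S 8--9, to obtain a reflexive decomposition $T_X=F\oplus(\oplus_i T_i)$ into $\Aut$-invariant subsheaves, corresponding to the restricted holonomy representation on $X_{\rm reg}$. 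The sheaves $T_i$ are the irreducible non-flat factors, with restricted holonomy group $SU(m_i)$ or $Sp(k_i)$, while $F$ is the flat factor. After passing to a finite quasi-\'etale cover (which keeps $K_{\widetilde X}$ trivial and the singularities canonical, as in the beginning of the section), I may assume the full holonomy representation already respects this decomposition, so that $T_{\widetilde X}=F\oplus(\oplus_i T_i)$ holds globally.

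Next I would verify the singular analog of Lemma \ref{c1=0}: since $K_X$ is $\Q$-trivial and each $T_i$ and $F$ is a saturated subsheaf/foliation inside a direct sum decomposition of $T_X$, the argument using \cite{CP19} (Lemma 4.10 and Theorem 4.1) applied to a resolution of $X$ gives $c_1(F)=c_1(T_i)=0$ for all $i$. Combined with the fact that $\Sym^m(T_i)$ restricted to $X_{\rm reg}$ is an irreducible (hence $\omega$-stable) representation of the holonomy group, the singular stability theorem (\cite{CCP}, Thm 1.3, or \cite{hp}, Thm 1.1) yields that $T_i^{\star}$ is not pseudo-effective for any $i$.

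The core algebraic step now runs as follows. The non-pseudo-effectivity of each $T_i^{\star}$ together with \cite{CP19} (Thm 4.2 and Lemma 4.6), combined with the singular refinements of \cite{SD}, Prop.~3.13 and Prop.~4.10, implies that each foliation $T_i$ has algebraic leaves, so it induces a dominant almost holomorphic map $f_i:\widetilde X\dashrightarrow B_i$ onto a normal projective base. The transversal decomposition $F\oplus(\oplus_{j\neq i}T_j)$ gives, on a Zariski-open subset of $\widetilde X$, a regular foliation transverse to $f_i$, so $f_i$ is generically locally trivial with a klt, trivially canonical, holonomy-non-flat fibre $Y_i$ having no holomorphic vector fields (by Bochner). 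I expect the main obstacle to be here: converting this generic local triviality into an actual birational splitting $\widetilde X\sim Y_i\times B_i$ on some further quasi-\'etale cover. This is exactly the role of the algebraicity/splitting criterion \ref{algcrit}; the point is that the absence of vector fields on $Y_i$ and the generic local triviality of $f_i$ force a genuine product decomposition, and this replaces the positive-characteristic argument of \cite{SD}.

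Once such a product splitting is obtained for a single factor $T_i$, I would iterate on dimension: the base $B$ inherits trivial canonical class and klt singularities, its holonomy decomposition is the one of $\widetilde X$ with the corresponding factor removed, and the same program applies. After finitely many steps I obtain, on a further quasi-\'etale cover, a product $\widetilde X=(\prod_i Y_i)\times B$ with $B$ klt, canonical $K_B\equiv 0$, and purely flat restricted holonomy. Finally, the purely-flat factor $B$ must be covered by an abelian variety: one can either cite the singular Bieberbach/Bochner argument (the tangent sheaf of $B$ is flat and globally generated by parallel vector fields after a quasi-\'etale cover, so the Albanese map of a resolution is birational and realizes $B$ as a quotient of a torus, see the treatment via \cite{GGK}), or reduce to Lemma \ref{pi-1} applied to a crepant resolution. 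This produces the required abelian factor $A$, and the grouping of the $Y_i$ according to holonomy type $Sp(k_i)$ or $SU(m_i)$ yields exactly the stated decomposition.
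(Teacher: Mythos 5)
Your proposal follows essentially the same route as the paper: EGZ Ricci-flat metric, the holonomy decomposition of \cite{GGK} on a quasi-\'etale cover, non-pseudo-effectivity of the duals of the non-flat factors via \cite{CCP}/\cite{hp} plus algebraicity via \cite{CP19}, then the generic local triviality from \cite{SD} (Prop.~3.13, 4.10) upgraded to a genuine product splitting by Theorem \ref{algcrit} (whose role in making the complementary, flat-containing, distribution algebraic and in bypassing Druel's characteristic-$p$ step you correctly identify), followed by induction on dimension and the singular Bieberbach theorem of \cite{GKP}. This matches the paper's argument in all essential steps.
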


%%%%%%%%%%%%%%%%%%%%%%%%%%%%%%%%%%%%%%%%%%%%%%%%%%%%%%%
%%%%%%%%%%%%%%%%%%%%%%%%%%%%%%%%%%%%%%%%%%%%%%%%%%%%%%%
%%%%%%%%%%%%%%%%%%%%%%%%%%%%%%%%%%%%%%%%%%%%%%%%%%%%%%%
%%%%%%%%%%%%%%%%%%%%%%%%%%%%%%%%%%%%%%%%%%%%%%%%%%%%%%%

\subsection{Restricted holonomy cover}\label{restrhol} We consider $\omega$ the `EGZ' Ricci-flat metric on $X$
constructed in \cite{EGZ}. As showed\footnote{In the first version, Prop. 7.9 was quoted, instead of Prop. 7.3 which is sufficient for our purposes, as pointed out by S. Druel and H. Guenancia, whom I thank for this observation.} in \cite{GGK}, Prop. 7.3, after a quasi-\'etale
cover, obtained from the permutation representation of the holonomy on the factors of the restricted holonomy, the tangent sheaf $T_X$ of $X$ decomposes as follows:
\begin{equation}\label{tears1}
T_{X_{\rm reg}}= \cF\oplus(\oplus_i \mathcal E_i)
\end{equation}
where the restricted holonomy of $\cF$ is trivial, and the other ones are either $SU(n_i)$ 
or $Sp(k_i)$. The other properties of $\mathcal E_i$ used here are:

\begin{enumerate}

\item[\rm (i)] \emph{The sheaf $\mathcal E_i$ defines a non-singular foliation
on $X_{\rm reg}$}. \smallskip

\item[\rm (ii)] \emph{The first Chern classes of $\mathcal E_i,\mathcal F$ are zero.} \smallskip

\item[\rm (iii)] \emph{All the symmetric powers of $\mathcal E_i$ and their duals are irreducible representations of the holonomy factors, and stable, for any polarisation on $X$.} The first property follows from standard representation theory, given the structure of the holonomy group. The stability is \cite{GGK}, Theorem 8.1, see also claim 9.17.

\item[\rm (iv)] \emph{We have $h^0(X, \mathcal E_i)=0,\forall i$}. This is again due to \cite{GGK}, Theorem 8.1: the global sections of 
$E_i$ are parallel, but the sheaf 
$\mathcal E_i$ has no non-zero parallel section over $X_{reg}$.

\item[\rm (v)] \emph{The preceding properties still hold true for any finite quasi-\'etale cover of $X$.} Indeed, the Ricci-flat metric on $X$ lifts to such covers, and
the restricted holonomy decomposition lifts there too.

\item[\rm (vi)] \emph{The holonomy factors and their holonomy groups do not depend on the Ricci-flat K\"ahler metric chosen.}
\end{enumerate}

%%%%%%%%%%%%%%%%%%%%%%%%%%%%%%%%%%%%%%%%%%%%%%%%%%%%%%%
%%%%%%%%%%%%%%%%%%%%%%%%%%%%%%%%%%%%%%%%%%%%%%%%%%%%%%%
%%%%%%%%%%%%%%%%%%%%%%%%%%%%%%%%%%%%%%%%%%%%%%%%%%%%%%%
%%%%%%%%%%%%%%%%%%%%%%%%%%%%%%%%%%%%%%%%%%%%%%%%%%%%%%%

%%%%%%%%%%%%%%%%%%%%%%%%%%%%%%%%%%%%%%%%%%%%%%%%%%%%%%%
%%%%%%%%%%%%%%%%%%%%%%%%%%%%%%%%%%%%%%%%%%%%%%%%%%%%%%%
%%%%%%%%%%%%%%%%%%%%%%%%%%%%%%%%%%%%%%%%%%%%%%%%%%%%%%%
%%%%%%%%%%%%%%%%%%%%%%%%%%%%%%%%%%%%%%%%%%%%%%%%%%%%%%%

\subsection{Algebraic foliations} 

Recall that a foliation on $X$ is said to be algebraic if so are its leaves.

In the decomposition \eqref{tears1} the foliations $\mathcal E_i$ are algebraic. Indeed, by either \cite{hp} (or \cite{CCP}, Theorem 3.1) none of the  $\mathcal E_i's$ is pseudo-effective\footnote{The result of \cite{CCP} can indeed be applied on a resolution of the singularities of $X$, by lifting both the foliation and an ample class, since its argument deals with the general point of $X$ only.}. We can thus apply \cite{CP19},  Theorem 4.2, Lemma 4.6, which implies that  they are algebraic.

Our goal now is to show that $\cF$ too is algebraic.
The claim is true unless some non-zero factor $\mathcal E_i$ appears in \eqref{tears1}. We thus assume that
\begin{equation}\label{tears2}
T_{X_{\rm reg}}= \mathcal G\oplus \mathcal E
\end{equation}
where $\mathcal E$ has positive rank and the properties (i)-(v) are satisfied.
So here we assume implicitly that $\mathcal E$ is one of the factors
$\mathcal E_i$ in \eqref{tears1} and $\mathcal G$ is the sum of the rest of them.
Observe that $\mathcal G$
is a foliation, since the decomposition 
\eqref{tears1} is induced by the local holonomy splitting of $X_{\rm reg}$ (in general, the sum of two foliations need not be inegrable).

\begin{lemma}\label{cyclique}
Let $X$ be an algebraic variety with canonical singularities and trivial first Chern class. Let $\omega$ be the $\Ricci$-flat metric in some K\"ahler class on $X$, and 
$T_{X_{\rm reg}}= \mathcal G\oplus \mathcal E$
a corresponding decomposition as in the preceding lines. Then:

(1) The foliation $\mathcal G$ is algebraic.

(2) There exists a quasi-\'etale cover $f:\widetilde{X}\to X$, where $\widetilde{X}$ has canonical singularities, and a product decomposition $\widetilde{X}=F\times Y$ which coincides at the tangent level with the decomposition $T\widetilde{X}=f^{[*]}\mathcal E\oplus f^{[*]}\mathcal G$.
\end{lemma}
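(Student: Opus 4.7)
The plan is to deduce both claims from the algebraicity criterion \ref{algcrit}, applied to the fibration produced by the already-established algebraicity of $\mathcal E$. Since $\mathcal E$ is algebraic, its leaves are contained in the fibres of a dominant rational map $\pi\colon X\to B$ onto a normal projective base $B$, and a general fibre $F$ is a projective variety with canonical singularities and numerically trivial canonical class, whose restricted holonomy with respect to $\omega|_F$ is one of the non-flat factors $SU(n)$ or $Sp(k)$ (inherited from the splitting of $T_X$ by property (vi) of \S\ref{restrhol}). The Bochner principle applied to the Ricci-flat metric $\omega|_{F_{\rm reg}}$ forces every global holomorphic vector field on $F$ to be parallel, and irreducible non-flat holonomy preserves no non-zero tangent vector; hence $h^0(F,T_F)=0$.

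Next, the foliation $\mathcal G$, being integrable and pointwise transverse to $\mathcal E$ on $X_{\rm reg}$, plays the role of an Ehresmann connection for $\pi$ over a Zariski-open set $B^\circ\subset B$ above which $\pi$ is smooth with target in $X_{\rm reg}$: a leaf of $\mathcal G$ through a point of a smooth fibre locally produces a holomorphic section of $\pi$, and these sections trivialise $\pi$ analytically over small open sets $U\subset B^\circ$ as $\pi^{-1}(U)\simeq F\times U$, compatibly with the splitting $\mathcal E\oplus\mathcal G$. Thus $\pi$ is generically locally trivial with fibre $F$, and its monodromy lies in the discrete group $\Aut(F)$. We are therefore in the exact setting of Theorem \ref{algcrit}, which concludes that (a) the horizontal foliation $\mathcal G$ is algebraic (proving claim (1)), and (b) there exists a finite \'etale base change $B'\to B^\circ$ after which $X\times_{B^\circ}B'$ becomes bimeromorphically the product $F\times B'$.

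For claim (2), the natural candidate for $\widetilde X$ is the normalisation of the main component of $X\times_B\overline{B'}$, where $\overline{B'}\to B$ is a projective completion of $B'\to B^\circ$ whose branch locus lifts to codimension at least two in $X$ (this can be arranged since $\mathcal E$ is regular on $X_{\rm reg}$, so the locus where $\pi$ degenerates is contained in $X\setminus X_{\rm reg}$). The map $\widetilde X\to X$ is then quasi-\'etale, and by the standard fact that quasi-\'etale covers of canonical varieties are canonical (see \cite{GGK}, \S 8--9), $\widetilde X$ has canonical singularities and trivial canonical class. The main obstacle is upgrading the bimeromorphic splitting to a genuine isomorphism $\widetilde X=F\times\overline{B'}$: the map produced is crepant bimeromorphic between canonical varieties with trivial canonical class, hence an isomorphism in codimension one; to rule out exceptional divisors one invokes again the Ricci-flat K\"ahler geometry on $\widetilde X$, whose holonomy decomposition splits isometrically along the two factors (cf.\ \cite{GGK}, \S 8--9) and so forces the bimeromorphic identification to extend to a global isomorphism compatible with the tangent splitting. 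This yields the desired product $\widetilde X=F\times Y$ with $Y=\overline{B'}$.
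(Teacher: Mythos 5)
Your treatment of claim (1) follows the paper's route: produce a fibration that is locally trivial in the analytic topology over a Zariski-open subset of the base, with fibre $F$ satisfying $h^0(F,T_F)=0$ (via Bochner and the irreducible non-flat holonomy), and feed it to Theorem \ref{algcrit}. One caveat: the generic local triviality is not as cheap in the singular setting as your Ehresmann sketch suggests --- the leaf closures of $\mathcal E$ may meet $X_{\rm sing}$, the map $\pi$ is a priori only rational, and $\mathcal G$ is only a sheaf-theoretic complement on $X_{\rm reg}$. The paper does not re-derive this; it first reduces to $\Q$-factorial terminal singularities and then quotes Prop.~3.13 of \cite{SD}, which is exactly the statement you are sketching. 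Treat that as a citation, not as a two-line argument.

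The genuine gap is in claim (2). The paper gets the biregular product decomposition after a quasi-\'etale cover by invoking \cite{SD}, Prop.~4.10, observing only that the hypothesis $\tilde q(X)=0$ there can be relaxed to $\tilde q(F)=0$, which holds by properties (iii) and (v) of the holonomy factors. You instead try to build $\widetilde X$ by hand from the output of Theorem \ref{algcrit}, and two steps do not hold up. First, quasi-\'etaleness of $\widetilde X\to X$: the finite cover $\overline{B'}\to B$ is controlled (\'etale) only over $B^\circ$, and the locus of $X$ lying over $B\setminus B^\circ$ is in general a divisor; regularity of $\mathcal E$ on $X_{\rm reg}$ does not force the degeneration locus of $\pi$ into $X\setminus X_{\rm reg}$ (multiple or non-generic leaf closures can occur over a divisor in $B$ even when the foliation is regular), so the branch locus can be divisorial and your cover need not be quasi-\'etale. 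Second, the upgrade from the birational splitting to an isomorphism: ``isomorphism in codimension one'' is not ``isomorphism'' (flops are the standard obstruction), the appeal to the Ricci-flat metric is an assertion rather than an argument, and you have not established that $F\times\overline{B'}$ has canonical singularities or trivial canonical class --- $\overline{B'}$ is an arbitrary compactification (an irreducible component of the Iso-space) with no control on its singularities or canonical bundle, so the map is not even known to be crepant. These are precisely the difficulties that Druel's Prop.~4.10 resolves; without it, your construction of the product decomposition is incomplete.
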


\begin{proof} Assume that $\mathcal G$ is algebraic, this will be shown next. The second claim then directly follows from \cite{SD}, Prop. 4.10 (notice that the assumption $\tilde{q}(X)=0$ there can be weakened to: $\tilde{q}(F)=0$, if $F$ is the closure of a generic leaf of $\mathcal E$. The property $\tilde{q}(X)=0$ is indeed used only to apply Prop. 4.8 of loc. cit., but 4.8  requires only the vanishing of $\tilde{q}$ for the fibres of  $\mathcal E$). Now $\tilde{q}(F)=0$ follows from the properties (iii), (v) of the holonomy factors quoted  above. \end{proof}

The algebraicity of $\mathcal G$ follows from Theorem \ref{algcrit}, which in fact implies more: the bimeromorphic decomposition of $X$ as a product, birationally, after a finite cover\footnote{S. Druel informed me that one could also apply his Theorem 1.5 in \cite{SD'}. Since he hypothesis, scope and proofs of both results are different, it seems worth stating and proving Theorem \ref{algcrit}.}. We may, and shall, assume that $X$ has $\Bbb Q$-factorial terminal singularities by step 1 of the proof of Prop. 4.10 of \cite{SD}. By Prop. 3.13 of loc. cit, there is a Zariski open subset\footnote{Up to a finite \'etale cover of $X^0$, by shrinking the open set $Y^0$ of the proof.} $X^0$ of $X$, and a projective morphism $\varphi^0:X^0\to Y^0$ which is a locally trivial fibration in the analytic topology, its fibres being isomorphic to some $F$ with $\tilde{q}(F)=0$,by the properties (iv), (v) of the holonomy factors quoted in \S 3.1 above. The conclusion then follows from the next algebraicity criterion for foliations.

\begin{theorem}\label{algcrit}
  Let $X$ and $Y$ be two K\"ahler\footnote{Or in the class $\mathcal C$.} normal spaces and let $f: X\to Y$ be a surjective
  holomorphic map with connected fibres. We denote by $\mathcal E:=\cF_{X/Y}$ the foliation on $X$ induced by $f$. We assume that:

(1) $f$ is a trivial fibration, locally in the analytic topology, with fibre $F$ over some nonempty Zariski open set $Y_0$ of $Y$

(2) $h^0(F,T_F)=0$, and so the automorphism group of $F$ is discrete. 

Then, there is a finite map $\vartheta:V\to Y$, \'etale over $Y_0$, such that base-changing $f:X\to Y$ and normalising the fibre-product $X_V:=X\times_Y V$, we have a birational decomposition $\delta: X_V\dasharrow F\times V$, isomorphic over $Y_0$. 

 Moreover,  if $\mathcal G$ is any distribution on $X$ such that $T_{X}= \cF_{X/Y}\oplus \mathcal G$ over $f^{-1}(U)$, for some nonempty analytically open $U\subset Y_0$, then: $\delta_*((id_X\times \vartheta)^*(\mathcal G))=\mathcal H$, where $\mathcal H:=T_{X_V/F}\subset T_{X_V}$ is the horizontal foliation defined by the product decomposition of $T_{F\times V}$. In particular, $\mathcal G$ is an algebraic foliation, and is the unique distribution on $X$ which is everywhere transversal to $T_{X/Y}$ over some open subset $U\subset Y_0$ as above.
\end{theorem}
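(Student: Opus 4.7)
The plan is to exhibit a finite \'etale cover of $Y_0$ that trivializes $f$, to extend it to a finite cover $\vartheta:V\to Y$ by normalization, and then to use the vanishing $h^0(F,T_F)=0$ to identify, after base change, any complementary distribution $\mathcal G$ with the tautological horizontal foliation on $F\times V$.

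\textbf{Step 1 (finite trivializing cover).} Because $f$ is analytically locally trivial over $Y_0$ with fibre $F$ and $\Aut(F)$ is discrete (from $h^0(F,T_F)=0$), the restricted fibration is classified by a monodromy $\rho:\pi_1(Y_0,y_0)\to\Aut(F)$. The key claim is that $\mathrm{im}(\rho)$ is \emph{finite}: fix a K\"ahler class $[\omega]$ on $X$; via any local trivialization its restriction to each fibre pulls back to one and the same class $\omega_F\in H^2(F,\bR)$ (this class is intrinsic because $[\omega]$ is globally defined on $X$), so $\mathrm{im}(\rho)\subset\Aut(F,\omega_F)$, and by the classical Lieberman--Fujiki theorem $\Aut(F,\omega_F)$ is finite ($\Aut^0(F)$ is trivial, and the action on $H^\ast(F,\bR)$ preserves an intersection form and a polarization, hence lies in a compact subgroup of $\mathrm{GL}(H^\ast(F,\bR))$; a discrete subgroup of a compact group is finite). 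Let $V_0\to Y_0$ be the Galois \'etale cover of group $\mathrm{im}(\rho)$, so that tautologically $X\times_Y V_0\cong F\times V_0$, and take $\vartheta:V\to Y$ to be the normalization of $Y$ in the function field of $V_0$, which is finite and \'etale over $Y_0$.

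\textbf{Steps 2--3 (decomposition and identification of $\mathcal G$).} The normalization $X_V$ of $X\times_Y V$ and the product $F\times V$ are normal complex spaces both containing $F\times V_0$ as a Zariski-dense open, with matching identifications from Step~1; hence a unique bimeromorphic $\delta:X_V\dasharrow F\times V$, isomorphic over $V_0$, extends this identity. Set $W:=V_0\cap\vartheta^{-1}(U)\subset V$: over $F\times W$ the transported distribution $\delta_\ast((\mathrm{id}\times\vartheta)^\ast\mathcal G)$ is a rank-$(\dim V)$ subbundle of $T_{F\times W}$ complementary to $\mathrm{pr}_1^\ast T_F$, hence the graph of a morphism $A:\mathrm{pr}_2^\ast T_W\to\mathrm{pr}_1^\ast T_F$, i.e.\ a section of $\mathrm{pr}_1^\ast T_F\otimes\mathrm{pr}_2^\ast\Omega^1_W$ on $F\times W$. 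Its restriction to any slice $F\times\{v\}$ lies in $H^0(F,T_F)\otimes T_v^\ast W=0$, so $A\equiv 0$ and the transported distribution coincides with $\mathcal H=T_{X_V/F}$ on $F\times W$. Since two saturated subsheaves of $T_{F\times V}$ agreeing on a non-empty open set are equal, the equality extends globally; algebraicity of $\mathcal G$ then follows (its leaves transport to the horizontal slices $\{x\}\times V$), and uniqueness is immediate, for any two admissible $\mathcal G$'s must both pull back to $\mathcal H$ and hence coincide.

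\textbf{Main obstacle.} The crux is the finiteness of $\mathrm{im}(\rho)$ in Step~1: without it the trivializing cover would only be an infinite \'etale cover of $Y_0$ and all the algebraicity conclusions would collapse. This finiteness combines the global existence of a K\"ahler class on $X$ (forcing the monodromy to preserve a polarization on $F$) with the Lieberman--Fujiki finiteness of $\Aut(F,\omega_F)$ for a compact K\"ahler $F$ with no non-trivial vector fields. The remainder of the argument is essentially formal, relying only on the vanishing $h^0(F,T_F)=0$ to kill the space of potential splittings.
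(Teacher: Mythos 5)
Your proof is correct in outline and reaches the same conclusions, but the construction of the finite cover $\vartheta:V\to Y$ goes by a genuinely different route. The paper works inside the relative cycle space: it forms $\Iso^*(Z,X/Y)\subset\cC(Z\times_YX/Y)$ for $Z=F\times Y$, takes its closure, and gets $V$ as an irreducible component; finiteness over $Y_0$ comes from discreteness of $\Aut(F)$ together with compactness of the components of the Chow--Barlet space (Lieberman/Bishop), and the extension over $Y\setminus Y_0$ plus the evaluation map $ev:F\times V\dasharrow X$ come for free from the cycle-space formalism. You instead observe that a locally trivial fibration with discrete structure group is flat, classified by a monodromy $\rho:\pi_1(Y_0)\to\Aut(F)$; you prove $\mathrm{im}(\rho)$ finite because the restriction of the global K\"ahler class is a monodromy-invariant K\"ahler class $\omega_F$ and $\Aut_{[\omega_F]}(F)$ is finite when $\Aut^0(F)$ is trivial (Lieberman--Fujiki); and you then extend the resulting \'etale cover of $Y_0$ to a finite cover of $Y$ by Grauert--Remmert. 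The two approaches are cousins --- Lieberman--Fujiki is itself proved by the cycle-space compactness the paper invokes directly --- but yours is arguably more transparent, at the cost of assembling three separate standard tools where the paper uses one construction. Two points deserve care in your write-up: (a) in the intended application $F$ may be a \emph{singular} normal compact K\"ahler space, so you must invoke the class-$\mathcal C$/singular version of Lieberman--Fujiki (Fujiki) rather than the classical manifold statement, and your parenthetical sketch via the action on $H^*(F,\bR)$ is not by itself a proof (the hard point is that the kernel of $\Aut_{[\omega_F]}(F)\to \mathrm{GL}(H^*(F,\bZ))$ lies in $\Aut^0(F)$, which again needs cycle-space compactness); (b) you should note that $\Aut(F)$ is a complex Lie group acting holomorphically so that the transition cocycle is genuinely locally constant. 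Your Steps 2--3 identifying $\mathcal G$ with $\mathcal H$ via the vanishing of $H^0(F,T_F)\otimes\Omega^1$ are the same computation as the paper's ($\gamma\in H^0(U,\Omega^1_U\otimes f_*\psi^*T_F)=0$), just organized fibrewise.
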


\begin{remark} 1. The birational splitting after a generically finite base-change $V\to Y$ (but not necessarily \'etale over $Y_0)$ always exists if $X$ is projective (or Moishezon) under the single hypothesis (1) of Theorem \ref{algcrit}. But the algebraicity of $\mathcal G$ requires the hypothesis (2) as seen, for example, when $f:X\to Y$ is a morphism of Abelian varieties with positive-dimensional fibres, which has many horizontal non-algebraic foliations.

2. There is certainly a bimeromorphic version of Theorem \ref{algcrit}, where the generic fibres of $f$ are assumed to be bimeromorphically equivalent, by similar arguments.
\end{remark}

\begin{proof}

Let $\varphi: Z:=F\times Y\to Y, \psi: F\times Y\to F$ 
be the projections onto the second (resp. first) factor. As in \cite{CaComp}, \S 8, we define:
\begin{equation}\label{tears6}
\Iso^*(Z,X/Y)\subset \cC(Z\times_Y X/Y)
\end{equation}
to be the subset of the relative Chow variety of $Z\times _YX$ over $Y$ parametrising the graphs of isomorphisms of $F$-seen as a fiber of $\varphi$ over a point $y\in Y_{0}$- to $X_{y}$, the fiber of $f$ over $y$. According to \cite{CaComp}, \S 8,
the set $\Iso^*(Z, F)$ is a Zariski open subset (with possibly infinitely many components) of the relative Chow scheme of $(Z\times_YX/Y)$, which consists of cycles contained in one of the fibres of the fibre-product over $Y$. 

\noindent Let $\Iso(Z,X/Y)$ be the closure of $\Iso^\star(Z,X/Y)$ in 
$\cC(Z\times_Y X/Y)$. It is equipped with a projection to $Y$, and it is a union of irreducible components of the Chow-Barlet scheme, according to general results in \emph{loc. cit.}
Since $f$ is locally trivial over a Zariski open subset of $Y$, the projection $\Iso^*(Z,X/Y)\to Y$ is open over $Y_0$. This projection is proper since the irreducible components of $\Iso(Z,X/Y)$ are compact (essentially by a result of D. Lieberman, based on E. Bishop's theorem). Moreover, by the assumption (2) of Theorem \ref{algcrit}, the fibres of $\Iso(Z,X/Y)$ to $Y$ are discrete over $Y_0$. If $V$ is an irreducible component of $\Iso(Z,X/Y)$, its projection $\vartheta: V\to Y$ is thus onto, and finite \'etale over $Y_0$.

We thus get a fibre product $X_V:=X\times_Y V$, with the obvious projections $f_V:X_V\to V, g:X_V\to X$. Let $V_0:=\vartheta^{-1}(Y_0)$.Any $v\in V_0$ is thus equipped naturally with an isomorphism $ev_v:F\cong X_y, y:=\vartheta(v)$. This  evaluation map extends (see \cite{CaComp}, \S 8, Prop. 1) meromorphically to: $ev:F\times V\to X$ which is thus bimeromorphic, and isomorphic over $V_0$.

In order to simplify notations, we replace $X,Y,f$ by $X_V,V, f_V$ respectively, and identify via $ev$ $X_V$ with $F\times V=F\times Y$ (recall $ev$ is isomorphic over $V_0=Y_0)$, and all the assumptions of Theorem \ref{algcrit} are preserved. The projections of $X=F\times Y$ onto its second (resp. first) factor are denoted $f=\varphi$ and $\psi$.

To establish the last claim of Theorem \ref{algcrit}, we only have to check that $\mathcal G$ coincides over $Y_0$ with the sheaf $T_{X/F}:=\mathcal H$, which will also prove the algebraicity of $\mathcal G$.

We restrict everything over the open set $U\subset Y_0$ appearing in the last assumption of Theorem \ref{algcrit}, so we assume that $X_U:=f^{-1}(U)=F\times U$, and we have thus a first decomposition: $T_{X_U}=\psi^*(T_F)\oplus \mathcal H$, where $\mathcal H$ is the kernel of the map $d\psi: T_{X_U}\to \psi^*(T_F)$.

The second decomposition $T_{X_U}=\psi^*(T_F)\oplus \mathcal G$ gives equivalently an isomorphism $df_{\vert \mathcal G}:\mathcal G\to f^*(T_U)$ over $X_U$. Let $(df_{\vert \mathcal G})^{-1}:f^*(T_U)\to \mathcal G$ be its inverse. Let  $\gamma:=d\psi\circ (df_{\vert \mathcal G})^{-1}:f^*(T_U)\to \psi^*(T_F)$ be the composite map, seen as an element $\gamma\in H^0(X_U, f^*(\Omega^1_U)\otimes \psi^*(T_F))$. We have the following equalities:
$$H^0(X_U,f^*(\Omega^1_U)\otimes\psi^*(T_F))=H^0(U,\Omega^1_U\otimes f_*(\psi^*(T_F)))=H^0(U, \Omega^1_U\otimes\{0\})=\{0\},$$ the last two equalities because of assumption (2), which implies that $f_*(\psi^*(T_F))=\{0\}$. This shows that $\mathcal G=\mathcal H=T_{Z/F}$ over $X_U$, and so everywhere by analytic continuation. \end{proof}

\noindent We can now conclude the proof of Theorem \ref{BBY} by  induction on $\dim(X)$, since we now know  that (up to  quasi-\'etale covers) $X=Y\times Z$, in which $Y$ is a product of varieties with canonical singularities, $c_1=0$, restricted holonomy either $SU$ or $Sp$, and $Z$ is in the same class of varieties, but with trivial restricted holonomy (i.e: $TZ=\cF)$. Theorem \ref{BBY} then follows from:

\subsection{A singular Bieberbach theorem} Assume now that only the factor $\cF$ appears in the decomposition \eqref{tears1}. We are reduced to show that if $Z$ has 
canonical singularities, trivial first Chern class and trivial restricted holonomy group, it is covered by an abelian variety. But this is just Corollary 1.16 in \cite{GKP}.

%%%%%%%%%%%%%%%%%%%%%%%%%%%%%%%%%%%%%%%%%%%%%%%%%%%%%%%
%%%%%%%%%%%%%%%%%%%%%%%%%%%%%%%%%%%%%%%%%%%%%%%%%%%%%%%
%%%%%%%%%%%%%%%%%%%%%%%%%%%%%%%%%%%%%%%%%%%%%%%%%%%%%%%
%%%%%%%%%%%%%%%%%%%%%%%%%%%%%%%%%%%%%%%%%%%%%%%%%%%%%%%

%%%%%%%%%%%%%%%%%%%%%%%%%%%%%%%%%%%%%%%%%%%%%%%%%%%%%%%
%%%%%%%%%%%%%%%%%%%%%%%%%%%%%%%%%%%%%%%%%%%%%%%%%%%%%%%
%%%%%%%%%%%%%%%%%%%%%%%%%%%%%%%%%%%%%%%%%%%%%%%%%%%%%%%
%%%%%%%%%%%%%%%%%%%%%%%%%%%%%%%%%%%%%%%%%%%%%%%%%%%%%%%

%%%%%%%%%%%%%%%%%%%%%%%%%%%%%%%%%%%%%%%%%%%%%%%%%%%%%%%
%%%%%%%%%%%%%%%%%%%%%%%%%%%%%%%%%%%%%%%%%%%%%%%%%%%%%%%
%%%%%%%%%%%%%%%%%%%%%%%%%%%%%%%%%%%%%%%%%%%%%%%%%%%%%%%
%%%%%%%%%%%%%%%%%%%%%%%%%%%%%%%%%%%%%%%%%%%%%%%%%%%%%%%

\subsection{The fundamental group.}

Let $X$ be a complex projective variety with klt singularities and $K_X\equiv 0$. Recall that $X$ is said to be 
irreducible symplectic (resp. Calabi-Yau) if its restricted holonomy representation for any, or some EGZ Ricci-flat K\"ahler 
metric is irreducible and of the form $Sp(m)$ (resp. $SU(n))$. In this situation, we have the following result, 
entirely similar to the smooth case:

\begin{theorem}\label{pi-1"} If $\chi(\mathcal O_X)\neq 0$, then $\pi_1(X)$ is finite, and its cardinality divides $\chi(\mathcal O_X)$, which is positive.
This applies to irreducible symplectic varieties and to even-dimensional Calabi-Yau varieties, with the same bounds as in Corollary \ref{pi-1'}.
\end{theorem}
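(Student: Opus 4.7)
\emph{The plan} is to combine Theorem~\ref{BBY} with a singular version of the $L^2$-Bochner argument used in the second proof of Proposition~\ref{pi-1}, applied factor by factor. First I would invoke Theorem~\ref{BBY} to obtain a quasi-\'etale cover $f\colon\widetilde X\to X$ with $\widetilde X=\prod_j Y_j\times A$, where $A$ is an abelian variety and each $Y_j$ is irreducible symplectic or Calabi--Yau. Since $f$ is \'etale in codimension one and both spaces have canonical singularities, $\chi(\cO_{\widetilde X})=\deg(f)\cdot\chi(\cO_X)\neq 0$; multiplicativity of $\chi$ over products together with $\chi(\cO_A)=0$ for $\dim A>0$ then forces $A$ to be a point, reducing the problem to each factor $Y_j$.

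For a fixed $Y_j$, I transport the second proof of Proposition~\ref{pi-1} to the singular setting. The EGZ Ricci-flat K\"ahler metric on $Y_j$ lifts to the universal cover $Y_j'$, and on the regular locus is a complete Ricci-flat K\"ahler metric. The Bochner identity $\Delta\|h\|^2=\|\nabla h\|^2\ge 0$ for any holomorphic $p$-form $h$, together with Gaffney's integration trick (valid in the $L^2$ setting on the complete K\"ahler manifold $(Y_j')_{\mathrm{reg}}$), forces every $L^2$ holomorphic $p$-form to be parallel; irreducibility of the $SU$ or $Sp$ holonomy then restricts such a parallel form to be a multiple of the canonical or symplectic generator, and its $L^2$-finiteness forces $Y_j'$ to be compact. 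Combined with Atiyah's $L^2$-index theorem \cite{A}, applied to a $\pi_1$-equivariant resolution, one obtains
\begin{equation*}
\chi(\cO_{Y_j})=\chi_{(2)}(\cO_{Y_j'})=\sum_{p}(-1)^p h^0_{(2)}(Y_j',\Omega^p_{Y_j'}),
\end{equation*}
whose right-hand side would vanish were $Y_j'$ non-compact; this contradicts $\chi(\cO_{Y_j})\neq 0$ and shows $\pi_1(Y_j)$ is finite. In particular each $\chi(\cO_{Y_j})>0$, from which the positivity claim $\chi(\cO_X)>0$ follows by multiplicativity and the degree formula.

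Hence $\pi_1(\widetilde X)$ is a finite product of finite groups, and $\pi_1(X)$ is finite since $\widetilde X\to X$ is a finite map. For the divisibility bound, pass to the universal cover $X'\to X$ of degree $d:=|\pi_1(X)|$; then $\chi(\cO_{X'})=d\cdot \chi(\cO_X)$, so $d$ divides $\chi(\cO_{X'})$. Specialising to the irreducible symplectic (resp.\ even-dimensional Calabi--Yau) case, the equality $\chi(\cO_{X'})=\tfrac n2+1$ (resp.\ $2$) recovers exactly the bounds of Corollary~\ref{pi-1'}. The main obstacle is justifying the singular $L^2$-index formula: one must verify that Atiyah's theorem and the Bochner/Gaffney estimate go through in the klt setting, either by passing to a $\pi_1$-equivariant resolution with controlled boundary so as to compare smooth and singular $L^2$ cohomologies, or by invoking the $L^2$-theory for klt pairs from \cite{GGK}; once this is in place, the rest is a formal assembly through Theorem~\ref{BBY}.
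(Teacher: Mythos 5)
Your route is genuinely different from the paper's, which never invokes Theorem \ref{BBY}: the paper passes to a resolution $\rho\colon X'\to X$, notes that $\pi_1(X')\to\pi_1(X)$ is surjective, and applies the criterion of \cite{Ca} ($\pi_1(X')$ is finite when $\chi(\mathcal{O}_{X'})\neq 0$ and $\kappa(X',\det\mathcal F)\leq 0$ for every $\mathcal F\subset\Omega^p_{X'}$, $p>0$), the Kodaira-dimension bound coming from the fact that reflexive symmetric differentials are parallel on $X_{\rm reg}$ by \cite{GGK}, Theorem 8.2, hence determined by their value at a single point. Your plan of transporting the second ($L^2$-index) proof of Proposition \ref{pi-1} to each factor of the Beauville--Bogomolov decomposition is legitimate in spirit, but as written it has two genuine gaps.

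First, your reduction relies on $\chi(\mathcal{O}_{\widetilde X})=\deg(f)\cdot\chi(\mathcal{O}_X)$ for the \emph{quasi}-\'etale cover $f$ of Theorem \ref{BBY}. This is false for quasi-\'etale (as opposed to \'etale) covers: for $X=A/\pm$ with $A$ an abelian surface one has $\chi(\mathcal{O}_X)=2$ (its resolution is a Kummer K3), while the degree-two quasi-\'etale cover $A$ has $\chi(\mathcal{O}_A)=0$. So your conclusion that the abelian factor $A$ must be a point does not follow; $X=A/\pm$ is a counterexample to that step (though not to the theorem, since $\pi_1(A/\pm)$ is trivial). The paper only ever uses multiplicativity of $\chi(\mathcal{O})$ under genuinely \'etale covers. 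Second, the Bochner--Gaffney argument requires a \emph{complete} metric, but the EGZ metric restricted to $(Y_j')_{\rm reg}$ is not complete near the singular set, and Atiyah's theorem \cite{A} does not apply off the shelf to the singular Euler characteristic; you explicitly flag this as ``the main obstacle'' without closing it, and it is precisely the difficulty the paper's argument is designed to avoid. To repair your approach you would either have to carry out the $L^2$-comparison on a $\pi_1$-equivariant resolution in detail, or replace this step by the paper's: resolution, the criterion of \cite{Ca}, and parallelism of reflexive forms.
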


\begin{proof} Let $\rho:X'\to X$ be some resolution. It is sufficient to prove the finiteness statement for $X'$, 
since the map $\rho_*(\pi_1(X'))\to \pi_1(X)$ is surjective. We can then apply \cite{Ca}, which says that $\pi_1(X')$ is finite
 if $\kappa(X',det(\mathcal F))\leq 0$ for any $\mathcal F\subset \Omega^p_{X'}), \forall p>0$.
 Since the sections of $det(\mathcal F)^{\otimes m}$ are sections of $Sym^m(\Omega^p_{X'})$, and the restrictions 
 of these are reflexive sections, hence parallel over the regular locus of $X$ by \cite{GGK}, Theorem 8.2, these 
 sections are determined by their value in one single point of $X_{reg}$. Thus $\kappa(X',det(\mathcal F))\leq 0$. 
 
 The invariant $\chi(\mathcal O_X)$ behaves as in the smooth case when $X$ has klt and thus rational singularities. 
 It is, in particular, multiplicative under finite \'etale covers. 
 
 If $X$ is irreducible symplectic (resp. even-dimensional Calabi-Yau) and n-dimensional, we have:
  $h^0(X,\Omega^{[p]}_X)\leq h_{n,p}$, where $h_{n,p}=0$ for $p$ odd, and $h_{n,p}=1$ for $p\leq n$ 
  even (resp. $h_{n,p}=0$ for $p\neq 0,n$, and $h_{n,p}=1$ for $p=0,n)$, which gives the claim, since we have equality on the universal cover.
\end{proof}

\begin{remark} Our proof of Theorem \ref{pi-1"} differs from the one in \cite{GGK}, 13.1. 
We refer to \cite{Ca}, \S5 for further remarks on this topic.
\end{remark}

\end{document}